\def\RSthmtxt{theorem~}\newref{thm}{name = \RSthmtxt}}
\def\RSlemtxt{lemma~}\newref{lem}{name = \RSlemtxt}}
\numberwithin{equation}{section}
\numberwithin{figure}{section}
\numberwithin{table}{section}
\theoremstyle{plain}
\newtheorem{thm}{\protect\theoremname}[section]
  \theoremstyle{definition}
  \newtheorem{defn}[thm]{\protect\definitionname}
  \theoremstyle{remark}
  \newtheorem{rem}[thm]{\protect\remarkname}
  \theoremstyle{remark}
  \newtheorem{notation}[thm]{\protect\notationname}
  \theoremstyle{definition}
  \newtheorem{example}[thm]{\protect\examplename}
  \theoremstyle{remark}
  \newtheorem*{note*}{\protect\notename}
  \theoremstyle{plain}
  \newtheorem{cor}[thm]{\protect\corollaryname}
  \theoremstyle{plain}
  \newtheorem{lem}[thm]{\protect\lemmaname}
  \theoremstyle{remark}
  \newtheorem*{acknowledgement*}{\protect\acknowledgementname}
\providecommand{\MR}[1]{}
\setlist[enumerate]{itemsep=5pt,topsep=3pt}
\setlist[enumerate,1]{label=\textup{(}\arabic*\textup{)},ref=\arabic*}
\setlist[enumerate,2]{label=\textup{(}\alph*\textup{)},ref=\theenumi \alph*}
  \providecommand{\acknowledgementname}{Acknowledgement}
  \providecommand{\corollaryname}{Corollary}
  \providecommand{\definitionname}{Definition}
  \providecommand{\examplename}{Example}
  \providecommand{\lemmaname}{Lemma}
  \providecommand{\notationname}{Notation}
  \providecommand{\notename}{Note}
  \providecommand{\remarkname}{Remark}
\providecommand{\theoremname}{Theorem}
\begin{document}

\title{Positive definite kernels and boundary spaces}

\author{Palle Jorgensen and Feng Tian}

\address{(Palle E.T. Jorgensen) Department of Mathematics, The University
of Iowa, Iowa City, IA 52242-1419, U.S.A. }

\email{palle-jorgensen@uiowa.edu}

\urladdr{http://www.math.uiowa.edu/\textasciitilde{}jorgen/}

\address{(Feng Tian) Department of Mathematics, Hampton University, Hampton,
VA 23668, U.S.A.}

\email{feng.tian@hamptonu.edu}

\subjclass[2000]{Primary 47L60, 46N30, 46N50, 42C15, 65R10, 31C20, 62D05, 94A20, 39A12;
Secondary 46N20, 22E70, 31A15, 58J65}

\keywords{Computational harmonic analysis, Hilbert space, reproducing kernel
Hilbert space, discrete analysis, interpolation, reconstruction, Gaussian
free fields, distribution of point-masses, Green's function, non-uniform
sampling, transforms, optimization, covariance.}

\maketitle
\pagestyle{myheadings}
\markright{}
\begin{abstract}
We consider a kernel based harmonic analysis of \textquotedblleft boundary,\textquotedblright{}
and boundary representations. Our setting is general: certain classes
of positive definite kernels. Our theorems extend (and are motivated
by) results and notions from classical harmonic analysis on the disk.
Our positive definite kernels include those defined on infinite discrete
sets, for example sets of vertices in electrical networks, or discrete
sets which arise from sampling operations performed on positive definite
kernels in a continuous setting. 

Below we give a summary of main conclusions in the paper: Starting
with a given positive definite kernel $K$ we make precise generalized
boundaries for $K$. They are measure theoretic \textquotedblleft boundaries.\textquotedblright{}
Using the theory of Gaussian processes, we show that there is always
such a generalized boundary for any positive definite kernel.
\end{abstract}

\tableofcontents{}

\section{Introduction}

Our purpose is to make precise a variety of notions of \textquotedblleft boundary\textquotedblright{}
and boundary representation for general classes of positive definite
kernels. And to prove theorems which allow us to carry over results
and notions from classical harmonic analysis on the disk to this wider
context (see \cite{MR1655831,MR1667822,MR1655832}). We stress that
our positive definite kernels include those defined on infinite discrete
sets, for example sets of vertices in electrical networks, or discrete
sets which arise from sampling operations performed on positive definite
kernels in a continuous setting, and with the sampling then referring
to suitable discrete subsets. See, e.g., \cite{MR3285408,MR3541255,MR2884231}.

Below we give a summary of main conclusions in the paper: Starting
with a given positive definite kernel $K$ on $S\times S$, we introduce
generalized boundaries for the set $S$ that carries $K$. It is a
measure theoretic ``boundary\textquotedblright{} in the form of a
probability space, but it is not unique. The set of measure boundaries
will be denoted $\mathcal{M}\left(K\right)$. We show that there is
always such a generalized boundary probability space associated to
any positive definite kernel. For example, as an element in $\mathcal{M}\left(K\right)$,
we can take a \textquotedblleft measure\textquotedblright{} boundary
to be the Gaussian process having $K$ as its covariance kernel. This
exists by Kolmogorov's consistency theorem. 
\begin{defn}
\label{def:ps}By a \emph{probability space}, we mean a triple $\left(B,\mathcal{F},\mu\right)$
where:
\begin{itemize}
\item $B$ is a set,
\item $\mathcal{F}$ is a $\sigma$-algebra of subsets of $B$, and
\item $\mu$ is a probability measure defined on $\mathcal{F}$, i.e., $\mu\left(\emptyset\right)=0$,
$\mu\left(B\right)=1$, $\mu\left(F\right)\geq0$ $\forall F\in\mathcal{F}$,
and if $\left\{ F_{i}\right\} _{i\in\mathbb{N}}\subset\mathcal{F}$,
$F_{i}\cap F_{j}=\emptyset$, $i\neq j$ in $\mathbb{N}$, then $\mu\left(\cup_{i}F_{i}\right)=\sum_{i}\mu\left(F_{i}\right)$. 
\end{itemize}
\end{defn}

\paragraph*{Conclusions, a summary: }
\begin{enumerate}
\item For every positive definite kernel $K$, we define a \textquotedblleft measure
theoretic boundary space\textquotedblright{} $\mathcal{M}\left(K\right)$.
Set 
\begin{align*}
\mathcal{M}\left(K\right):= & \big\{\left(B,\mathcal{F},\mu\right)\;\mbox{a measure space which yields}\\
 & \;\mbox{a factorization for }K,\mbox{ see Definition \ref{def:Mk}}\big\}.
\end{align*}
This set $\mathcal{M}\left(K\right)$ generalizes other notions of
\textquotedblleft boundary\textquotedblright{} used in the literature
for networks, and for more general positive definite kernels, and
their associated reproducing kernel Hilbert spaces (RKHSs). See, e.g.,
\cite{MR3450534,MR3402823,MR3263485,MR3251728}.
\item For any positive definite kernel $K$, the corresponding $\mathcal{M}\left(K\right)$
is always non-empty. The natural Gaussian process path-space with
covariance kernel $K$, and Wiener measure $\mu$ is in $\mathcal{M}\left(K\right)$. 
\item \label{enu:sm3}Given $K$, let $\mathscr{H}\left(K\right)$ be the
associated RKHS. Then for every $\mu\in\mathcal{M}\left(K\right)$
there is a canonical isometry $W_{\mu}$ mapping $\mathscr{H}\left(K\right)$
into $L^{2}\left(\mu\right)$. For details, see \thmref{isom}.
\item \label{enu:sm4}The isometry $W_{\mu}$ in (\ref{enu:sm3}) generally
does not map onto $L^{2}\left(\mu\right)$. 

It does however for the $\frac{1}{4}$-Cantor example, i.e., the restriction
of Hausdorff measure of dimension $\frac{1}{2}$ to the standard $\frac{1}{4}$-Cantor
set. In this case, we have a positive definite kernel on $\mathbb{D}\times\mathbb{D}$,
where $\mathbb{D}$ is the unit disk in the complex plane; and we
can take the circle as boundary for $\mathbb{D}$. For $\mu$, we
take the corresponding $\frac{1}{4}$- Cantor measure. But in general,
for positive definite functions $K$, a \textquotedblleft measure
theoretic boundary space\textquotedblright{} is much \textquotedblleft bigger\textquotedblright{}
than probability spaces on the metric boundary for $K$.
\item Using the isometries from (\ref{enu:sm3}), we can turn $\mathcal{M}\left(K\right)$
into a partially ordered set; see \defref{po}. Then, using Zorn's
lemma, one shows that $\mathcal{M}\left(K\right)$ always contains
minimal elements. The minimal elements are not unique. 
\item And even if $\mu$ is chosen minimal in $\mathcal{M}\left(K\right)$,
the corresponding isometry $W_{\mu}$ still generally does not map
onto $L^{2}\left(\mu\right)$. A case in point: the Szegö kernel,
and $\mu=$ Lebesgue measure on a period interval.
\end{enumerate}
\begin{rem}
The Cantor examples in (\ref{enu:sm4}) are special cases of affine-selfsimilarity
limit (fractal) contractions. See, e.g., \cite{MR3531184,MR3418124,MR3326023,MR2945155,MR2817339,MR2559724,MR2440130,MR2240643}. 

The general role for the fractal dimension in these cases is as follows:
\[
\dim_{fractal}=\frac{\ln s}{\ln d}=\log_{d}\left(s\right),
\]
where $s=$ the number of translations in each iteration, and $d=$
the linear scale. For example, the middle-third Cantor fractal has
$\dim_{F}=\frac{\ln2}{\ln3}=\log_{3}\left(2\right)$. The Sierpinski-gasket
has $\dim_{F}=\frac{\ln3}{\ln2}<2$. For the Sierpinski construction
in $\mathbb{R}^{3}$, we have $\dim_{F}=\frac{\ln4}{\ln2}=2<3$. 
\end{rem}

\section{Generalized boundary spaces for positive definite kernels}
\begin{defn}
\label{def:pd}Let $S$ be any set. A function $K:S\times S\rightarrow\mathbb{C}$
is \emph{positive definite} iff (Def.) 
\begin{equation}
\sum_{i}\sum_{j}\overline{c_{i}}c_{j}K\left(s_{i},s_{j}\right)\geq0,\label{eq:n1}
\end{equation}
for all $\left\{ s_{i}\right\} _{i=1}^{n}\subset S$, and all $\left(c_{i}\right)_{i=1}^{n}\in\mathbb{C}^{n}$. 
\end{defn}
\begin{rem}
~
\begin{enumerate}[label=(\roman{enumi}),ref=\roman{enumi}]
\item \label{enu:ev1}Given a positive definite kernel $K$ on $S\times S$,
there is then an associated mapping $E_{S}:S\rightarrow\left\{ \mbox{Functions on }S\right\} $
given by 
\begin{equation}
E_{S}\left(t\right)=K\left(t,\cdot\right),\label{eq:ev1}
\end{equation}
where the dot ``$\cdot$'' in (\ref{eq:ev1}) indicates the independent
variable; so $S\ni s\longrightarrow K\left(t,s\right)\in\mathbb{C}$. 
\item \label{enu:ev2}We shall assume that $E_{S}$ is 1-1, i.e., if $s_{1},s_{2}\in S$,
and $k\left(s_{1},t\right)=k\left(s_{2},t\right)$, $\forall t\in S$,
then it follows that $s_{1}=s_{2}$. This is not a strong limiting
condition on $K$. 
\end{enumerate}
\end{rem}
\begin{notation}
We shall view the Cartesian product 
\begin{equation}
B_{S}:=\prod_{S}\mathbb{C}=\mathbb{C}^{S}\label{eq:ev2}
\end{equation}
as the set of all functions $S\rightarrow\mathbb{C}$. 

It follows from assumption (\ref{enu:ev2}) that $E_{S}:S\rightarrow B_{S}$
is an injection, i.e., with $E_{S}$, we may identity $S$ as a ``subset''
of $B_{S}$. 

For $v\in S$, set $\pi_{v}:B_{S}\longrightarrow\mathbb{C}$, 
\begin{equation}
\pi_{v}\left(x\right)=x\left(v\right),\quad\forall x\in B_{S};\label{eq:ev3}
\end{equation}
i.e., $\pi_{v}$ is the coordinate mapping at $v$. The topology on
$B_{S}$ shall be the product topology; and similarly the $\sigma$-algebra
$\mathcal{F}_{S}$ will be the the one generated by $\left\{ \pi_{v}\right\} _{v\in S}$,
i.e., generated by the family of subsets
\begin{equation}
\pi_{v}^{-1}\left(M\right),\;v\in S,\;\mbox{and }M\subset\mathbb{C}\;\mbox{a Borel set}.\label{eq:ev4}
\end{equation}
\end{notation}
\begin{defn}
\label{def:Mk}Fix a positive definite kernel $K:S\times S\rightarrow\mathbb{C}$.
Let $\mathcal{M}\left(K\right)$ be the set of all probability spaces
(see \defref{ps}), so that $\left(B,\mathcal{F},\mu\right)\in\mathcal{M}\left(K\right)$
iff (Def.) there exists an extension
\[
K^{B}:S\times B\longrightarrow\mathbb{C},\;\text{and}
\]
\begin{equation}
\int_{B}\overline{K^{B}\left(s_{1},b\right)}K^{B}\left(s_{2},b\right)d\mu\left(b\right)=K\left(s_{1},s_{2}\right),\label{eq:n2}
\end{equation}
for all $\left(s_{1},s_{2}\right)\in S\times S$.
\end{defn}
\begin{rem}
In Examples \ref{exa:D1}-\ref{exa:D2}, we discuss the case where
\begin{align*}
S & =\mathbb{D}=\left\{ z\in\mathbb{C}\mid\left|z\right|<1\right\} \\
B & =\partial\mathbb{D}=\left\{ z\in\mathbb{C}\mid\left|z\right|=1,\;\mbox{or }z=e^{ix},\;x\in(-\pi,\pi]\right\} ;
\end{align*}
but in the definition of $\mathcal{M}\left(K\right)$, we allow all
possible measure spaces $\left(B,\mathcal{F},\mu\right)$ as long
as the factorization (\ref{eq:n2}) holds.
\end{rem}

\paragraph*{Questions:}
\begin{enumerate}
\item Given (\ref{eq:n1}) what are the solutions $\left(B,\mathcal{F},\mu\right)$
to (\ref{eq:n2})? 
\item Are there extensions $K^{B}:S\times B\rightarrow\mathbb{C}$ such
that $B$ is a boundary with respect to the metric on $S$? That is,
\begin{equation}
dist_{K}\left(s_{1},s_{2}\right)=\left\Vert K_{s_{1}}-K_{s_{2}}\right\Vert _{\mathscr{H}}\label{eq:n3}
\end{equation}
and $\lim K^{B}\left(\cdot,b\right)=\lim_{i\rightarrow\infty}K\left(\cdot,s_{i}\right)$. 
\item Find the subsets $S_{0}\subset S$ such that the following sampling
property holds for all $f\in C\left(B\right)$ (or for a subspace
of $C\left(B\right)$):
\begin{equation}
f\left(b\right)=\sum_{s_{i}\in S_{0}}f\left(s_{i}\right)K^{B}\left(s_{i},b\right),\;\forall b\in B.\label{eq:n4}
\end{equation}
\end{enumerate}
\begin{example}[Shannon]
Let $BL$ be the space of band-limited functions on $\mathbb{R}$,
where 
\[
BL=\left\{ f\in L^{2}\left(\mathbb{R}\right)\mid\hat{f}\left(\xi\right)=0,\;\xi\in\mathbb{R}\backslash\left[-\tfrac{1}{2},\tfrac{1}{2}\right]\right\} .
\]
We have 
\begin{equation}
f\left(t\right)=\sum_{n\in\mathbb{Z}}f\left(n\right)\frac{\sin\pi\left(t-n\right)}{\pi\left(t-n\right)},\quad\forall t\in\mathbb{R},\;\forall f\in BL.\label{eq:n5}
\end{equation}
\end{example}
\begin{defn}
\label{def:GC}We say $\left(B,\mathcal{F},\mu\right)\in GC$, \emph{generalized
Carleson measures}, iff (Def.) there exists a constant $C_{\mu}$
such that 
\begin{equation}
\int_{B}|\widetilde{f}\left(b\right)|^{2}d\mu\left(b\right)\leq C_{\mu}\left\Vert f\right\Vert _{\mathscr{H}\left(K\right)}^{2},\;\forall f\in\mathscr{H}\left(K\right),\label{eq:n6}
\end{equation}
where $\widetilde{f}$ in (\ref{eq:n6}) is defined via the extension
\begin{equation}
\widetilde{f}\left(b\right):=\langle K_{b}^{B},f\rangle_{\mathscr{H}\left(K\right)},\quad b\in B,\:f\in\mathscr{H}\left(K\right).\label{eq:n7}
\end{equation}

Set $\left(GC\right)_{1}:=$ generalized Carleson measures with $C_{\mu}=1$. 
\end{defn}
\begin{note*}
The case $C_{\mu}=1$ is of special interest. For classical theory
on Carleson measures, we refer to \cite{MR739800,MR816238,MR2778503,MR2934601,MR3335506,MR3385785}.
\end{note*}
\begin{defn}
Let $\mathscr{H}_{i}$, $i=1,2$ be Hilbert spaces. We say that $\mathscr{H}_{1}$
is \emph{boundedly contained} in $\mathscr{H}_{2}$ iff (Def.) $\mathscr{H}_{1}\subset\mathscr{H}_{2}$
(as a subset), and if the inclusion map $\mathscr{H}_{1}\rightarrow\mathscr{H}_{2}$,
$h\mapsto h$, is bounded. That is, there exits $C<\infty$ such that
for all $h\in\mathscr{H}_{1}$, 
\begin{equation}
\left\Vert h\right\Vert _{\mathscr{H}_{2}}\leq C\left\Vert h\right\Vert _{\mathscr{H}_{1}.}\label{eq:bc1}
\end{equation}
 
\end{defn}
\begin{rem}
Note that if $\left(B,\mathcal{F},\mu\right)$ is a measure space,
$K:S\times S\rightarrow\mathbb{C}$ is a positive definite kernel,
then $\left(B,\mathcal{F},\mu\right)\in GC$ if and only if $\mathscr{H}\left(K\right)$
is boundedly contained in $L^{2}\left(B,\mathcal{F},\mu\right)$;
see (\ref{eq:n6}).

We stress that with the inclusion $\mathscr{H}\left(K\right)$ ``$\subset$''
$L^{2}\left(\mu\right)$ we can make the implicit identification $f\sim\widetilde{f}$
where 
\begin{equation}
\widetilde{f}\left(b\right)=\langle\widetilde{K}_{b},f\rangle_{\mathscr{H}\left(K\right)},\quad\forall f\in\mathscr{H}\left(K\right),\;b\in B;\label{eq:bc2}
\end{equation}
and (\ref{eq:bc2}) is to be understood for a.a. $b$ w.r.t. $\left(\mathcal{F},\mu\right)$.
\end{rem}
In \cite{2015arXiv150202549J}, we showed that for all positive definite
kernel $K\left(s,t\right)$, $\left(s,t\right)\in S\times S$, we
have $\mathcal{M}\left(K\right)\neq\emptyset$. Moreover, 
\begin{thm}
\label{thm:isom}Fix a positive definite kernel $K:S\times S\rightarrow\mathbb{C}$,
then 
\begin{equation}
\mathcal{M}\left(K\right)\subset\left(GC\right)_{1}.\label{eq:n8}
\end{equation}
If $\left(B,\mathcal{F},\mu\right)\in\mathcal{M}\left(K\right)$,
then the mapping
\begin{equation}
\mathscr{H}\left(K\right)\ni K(s,\underset{\substack{\uparrow\\
\mbox{on }S
}
}{\cdot})\longrightarrow K^{B}(s,\underset{\substack{\uparrow\\
\mbox{on }B
}
}{\cdot})\in L^{2}\left(B,\mu\right)\label{eq:m4}
\end{equation}
extends by linearity and closure to an isometry (see \defref{GC})
\[
W_{B}:\mathscr{H}\left(K\right)\longrightarrow L^{2}\left(B,\mu\right),\quad f\longrightarrow\widetilde{f}.
\]
However, $W_{B}$ is generally not onto $L^{2}\left(B,\mu\right)$. 

More specifically, we have 
\begin{equation}
\left\Vert \sum\nolimits _{j}c_{j}K\left(s_{j},\cdot\right)\right\Vert _{\mathscr{H}\left(K\right)}^{2}=\left\Vert \sum\nolimits _{j}c_{j}K^{B}\left(s_{j},\cdot\right)\right\Vert _{L^{2}\left(B,\mu\right)}^{2},\label{eq:m5}
\end{equation}
or equivalently, 
\begin{equation}
\sum_{j_{1}}\sum_{j_{2}}\overline{c_{j_{1}}}c_{j_{2}}K\left(s_{j_{1}},s_{j_{2}}\right)=\int_{B}\left|\sum\nolimits _{j}c_{j}K^{B}\left(s_{j},b\right)\right|^{2}d\mu\left(b\right)
\end{equation}
for all finite sums, where $\left\{ s_{j}\right\} $, $\left\{ c_{j}\right\} \subset\mathbb{C}^{n}$,
$\forall n\in\mathbb{N}$. 
\end{thm}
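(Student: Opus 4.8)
The plan is to reduce the entire statement to one bilinear identity on the dense subspace of $\mathscr{H}\left(K\right)$ spanned by the kernel sections $\left\{ K\left(s,\cdot\right)\right\} _{s\in S}$, and then to extend by continuity. First I would record the two preliminaries that make the assertion well posed. Putting $s_{1}=s_{2}=s$ in the factorization identity (\ref{eq:n2}) gives $\int_{B}\left|K^{B}\left(s,b\right)\right|^{2}d\mu\left(b\right)=K\left(s,s\right)<\infty$, so each boundary section $K^{B}\left(s,\cdot\right)$ is genuinely an element of $L^{2}\left(B,\mu\right)$ and the right-hand side of (\ref{eq:m4}) lands in the target. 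Second, I would invoke the defining structure of the RKHS: finite combinations $\sum_{j}c_{j}K\left(s_{j},\cdot\right)$ are dense, and the inner product is fixed by $\langle K\left(s_{1},\cdot\right),K\left(s_{2},\cdot\right)\rangle_{\mathscr{H}\left(K\right)}=K\left(s_{1},s_{2}\right)$, giving the norm formula
\[
\big\|\sum\nolimits_{j}c_{j}K\left(s_{j},\cdot\right)\big\|_{\mathscr{H}\left(K\right)}^{2}=\sum\nolimits_{j_{1}}\sum\nolimits_{j_{2}}\overline{c_{j_{1}}}c_{j_{2}}K\left(s_{j_{1}},s_{j_{2}}\right).
\]

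The heart of the argument is the matching computation on the $L^{2}$ side. Expanding the modulus square and interchanging sum and integral, the calculation I would carry out is
\[
\int_{B}\big|\sum\nolimits_{j}c_{j}K^{B}\left(s_{j},b\right)\big|^{2}d\mu\left(b\right)=\sum\nolimits_{j_{1}}\sum\nolimits_{j_{2}}\overline{c_{j_{1}}}c_{j_{2}}\int_{B}\overline{K^{B}\left(s_{j_{1}},b\right)}K^{B}\left(s_{j_{2}},b\right)d\mu\left(b\right),
\]
where the inner integral equals $K\left(s_{j_{1}},s_{j_{2}}\right)$ by (\ref{eq:n2}). Comparing with the previous display establishes (\ref{eq:m5}). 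This single identity does double duty: it shows that the assignment $\sum_{j}c_{j}K\left(s_{j},\cdot\right)\mapsto\sum_{j}c_{j}K^{B}\left(s_{j},\cdot\right)$ is \emph{well defined} on the span (an element that is zero in $\mathscr{H}\left(K\right)$ has zero $\mathscr{H}$-norm, hence by the identity its image has zero $L^{2}$-norm and is the zero class), and that the map is isometric. Since $L^{2}\left(B,\mu\right)$ is complete and the span is dense, it extends uniquely by linearity and continuity to an isometry $W_{B}\colon\mathscr{H}\left(K\right)\to L^{2}\left(B,\mu\right)$.

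For the inclusion (\ref{eq:n8}) I would then observe that isometry forces $\int_{B}|\widetilde{f}\left(b\right)|^{2}d\mu\left(b\right)=\|W_{B}f\|_{L^{2}}^{2}=\|f\|_{\mathscr{H}\left(K\right)}^{2}$ for every $f$, so the Carleson estimate (\ref{eq:n6}) holds with equality, in particular with $C_{\mu}=1$; thus $\left(B,\mathcal{F},\mu\right)\in\left(GC\right)_{1}$ and $\mathcal{M}\left(K\right)\subset\left(GC\right)_{1}$. It remains to reconcile $W_{B}$ with the pointwise transform $\widetilde{f}$ of (\ref{eq:n7}): for $f=K\left(s,\cdot\right)$ the reproducing property yields $\langle K_{b}^{B},K\left(s,\cdot\right)\rangle_{\mathscr{H}\left(K\right)}=K^{B}\left(s,b\right)=\left(W_{B}f\right)\left(b\right)$, so $\widetilde{f}$ and $W_{B}f$ agree on the dense span and hence as elements of $L^{2}$. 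Finally, the range of $W_{B}$ is by construction the $L^{2}\left(\mu\right)$-closure of $\operatorname{span}\left\{ K^{B}\left(s,\cdot\right)\right\} _{s\in S}$, so surjectivity is equivalent to the totality of this family, which fails in general. I expect the only genuine obstacle to be this last identification together with the a.e.\ bookkeeping it requires: formula (\ref{eq:n7}) is a pointwise expression (and presupposes $K_{b}^{B}\in\mathscr{H}\left(K\right)$ for a.e.\ $b$), whereas $W_{B}f$ is only an $L^{2}$-class, so the identification $\widetilde{f}\sim W_{B}f$ must be read $\mu$-a.e.; the isometry itself is an immediate consequence of (\ref{eq:n2}) and the RKHS inner product. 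The concrete failures of surjectivity (the Szeg\H{o} kernel with Lebesgue measure, and the $\tfrac{1}{4}$-Cantor kernel) I would defer to the examples.
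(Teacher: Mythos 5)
Your proposal is correct and follows essentially the same route as the paper: both reduce the claim to the norm identity (\ref{eq:m5}) on finite combinations $\sum_j c_j K(s_j,\cdot)$, verify it by expanding the $L^2$-norm and applying the factorization (\ref{eq:n2}), and then extend by density. Your additional remarks on well-definedness of the map on the span and on the $\mu$-a.e.\ identification of $W_{B}f$ with $\widetilde{f}$ are points the paper leaves implicit, but they do not change the argument.
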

\begin{proof}
Suppose $\left(B,\mathcal{F},\mu\right)\in\mathcal{M}\left(K\right)$,
i.e., assume $\left(B,\mathcal{F},\mu\right)$ is a measure space
such that (\ref{eq:n2}) holds. Set $K^{B}=\widetilde{K}$, refer
to the extension $\widetilde{K}:S\times B\rightarrow\mathbb{C}$ introduced
in (\ref{eq:n3}).

We claim that then (\ref{eq:n6}) holds for all $f\in\mathscr{H}\left(K\right)$.
Here $\widetilde{f}$ is defined via $\widetilde{K}$; see (\ref{eq:n7}):
\[
\widetilde{f}\left(b\right):=\langle\widetilde{K}_{b},f\rangle_{\mathscr{H}\left(K\right)},\;\forall f\in\mathscr{H}\left(K\right),\;\forall b\in B..
\]

Claim: $f\mapsto\widetilde{f}$ is isometric from $\mathscr{H}\left(K\right)$
into $L^{2}\left(\mu\right)$, i.e., 
\begin{equation}
\Vert\widetilde{f}\Vert_{L^{2}\left(B,\mu\right)}=\left\Vert f\right\Vert _{\mathscr{H}\left(K\right)},\;\forall f\in\mathscr{H}\left(K\right).\label{eq:n10}
\end{equation}
\emph{Proof of }(\ref{eq:n10})\emph{. }It is enough to consider the
case where $f=\sum_{i}c_{i}K_{s_{i}}$ (finite sum), see (\ref{eq:n1});
so that $\widetilde{f}=\sum_{i}c_{i}\widetilde{K}_{s_{i}}$ on $B$,
and 
\begin{align*}
\Vert\widetilde{f}\Vert_{L^{2}\left(B,\mu\right)}^{2} & =\sum_{i}\sum_{j}\overline{c_{i}}c_{j}\langle\widetilde{K}_{s_{i}}\widetilde{K}_{s_{j}}\rangle_{L^{2}\left(B,\mu\right)}\\
 & =\sum_{i}\sum_{j}\overline{c_{i}}c_{j}\int_{B}\overline{\widetilde{K}_{s_{i}}\left(b\right)}\widetilde{K}_{s_{j}}\left(b\right)d\mu\left(b\right)\\
 & =\sum_{i}\sum_{j}\overline{c_{i}}c_{j}K\left(s_{i},s_{j}\right)\quad\left(\text{see }\left(\ref{eq:n2}\right),\;\text{use }\mu\in\mathcal{M}\left(K\right)\right)\\
 & =\left\Vert f\right\Vert _{\mathscr{H}\left(K\right)}^{2},\quad\text{by }\left(\ref{eq:n1}\right)\;\text{and the defn. of }\mathscr{H}\left(K\right).
\end{align*}
 
\end{proof}
\begin{cor}
Suppose $\mathscr{H}\left(K\right)\ni f\xrightarrow{\;W_{B}\;}\widetilde{f}\in L^{2}\left(B,\mu\right)$
is bounded, i.e., that $\mu$ is a Carleson measure, then the adjoint
operator
\[
W_{B}^{*}:L^{2}\left(B,\mu\right)\longrightarrow\mathscr{H}\left(K\right)
\]
 is given by 
\begin{equation}
W_{B}^{*}\left(F\right)\left(s\right)=\int_{B}\overline{\widetilde{K}\left(s,b\right)}F\left(b\right)d\mu\left(b\right),\;\forall F\in L^{2}\left(B,\mu\right).\label{eq:n11}
\end{equation}
\end{cor}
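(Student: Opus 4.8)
The plan is to read (\ref{eq:n11}) as a routine identification of a Hilbert-space adjoint, where the only genuine hypothesis is the Carleson boundedness of $W_{B}$. That boundedness guarantees that $W_{B}^{*}$ exists as a bona fide bounded operator $L^{2}\left(B,\mu\right)\rightarrow\mathscr{H}\left(K\right)$, characterized by
\[
\langle W_{B}f,F\rangle_{L^{2}\left(B,\mu\right)}=\langle f,W_{B}^{*}F\rangle_{\mathscr{H}\left(K\right)},\quad\forall f\in\mathscr{H}\left(K\right),\;\forall F\in L^{2}\left(B,\mu\right).
\]
Since $W_{B}^{*}F\in\mathscr{H}\left(K\right)$ is an honest function on $S$, I intend to recover its values pointwise via the reproducing property $g\left(s\right)=\langle K_{s},g\rangle_{\mathscr{H}\left(K\right)}$, which is consistent with the definition (\ref{eq:n7}) of $\widetilde{f}$ read at points of $S$.

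First I would fix $F\in L^{2}\left(B,\mu\right)$ and $s\in S$ and test the adjoint identity against the single kernel element $f=K_{s}=K\left(s,\cdot\right)$. Applying the reproducing property to $g=W_{B}^{*}F$ and then the adjoint identity with $f=K_{s}$ gives
\[
\left(W_{B}^{*}F\right)\left(s\right)=\langle K_{s},W_{B}^{*}F\rangle_{\mathscr{H}\left(K\right)}=\langle W_{B}K_{s},F\rangle_{L^{2}\left(B,\mu\right)}.
\]
Next I would substitute $W_{B}K_{s}=\widetilde{K}\left(s,\cdot\right)=K^{B}\left(s,\cdot\right)$, which is exactly the content of the isometry (\ref{eq:m4}) in \thmref{isom}, and expand the $L^{2}\left(B,\mu\right)$-inner product (conjugate linear in its first slot) to obtain
\[
\left(W_{B}^{*}F\right)\left(s\right)=\int_{B}\overline{\widetilde{K}\left(s,b\right)}F\left(b\right)d\mu\left(b\right),
\]
which is precisely (\ref{eq:n11}). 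It is worth noting that no density/closure argument is needed for the pointwise formula itself: the reproducing identity holds for every element of $\mathscr{H}\left(K\right)$, in particular for $W_{B}^{*}F$, and the adjoint identity holds in particular at $f=K_{s}$, so the value at each $s$ is pinned down directly.

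The only point requiring care—and hence the (mild) main obstacle—is to confirm that the right-hand integral is well defined for every $s$ and $F$. This follows from Cauchy--Schwarz: by the factorization (\ref{eq:n2}) the section $\widetilde{K}\left(s,\cdot\right)=K^{B}\left(s,\cdot\right)$ lies in $L^{2}\left(B,\mu\right)$ with $\Vert K^{B}\left(s,\cdot\right)\Vert_{L^{2}\left(B,\mu\right)}^{2}=K\left(s,s\right)<\infty$, so the product $\overline{\widetilde{K}\left(s,\cdot\right)}F$ is integrable and assigns a finite value at each $s$. Because $\mu$ is assumed Carleson, $W_{B}$ is bounded and $W_{B}^{*}$ is a genuine bounded operator into $\mathscr{H}\left(K\right)$, which is exactly what licenses applying the reproducing property to $W_{B}^{*}F$ in the first step. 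Thus the argument is essentially bookkeeping: verifying integrability and keeping the conjugation convention straight so that the complex conjugate lands on $\widetilde{K}$ rather than on $F$; there is no substantive analytic difficulty beyond the already-assumed boundedness.
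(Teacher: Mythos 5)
Your proposal is correct and follows essentially the same route as the paper's own proof: evaluate $\left(W_{B}^{*}F\right)\left(s\right)$ via the reproducing property, pass to $\left\langle W_{B}K_{s},F\right\rangle _{L^{2}\left(\mu\right)}$ by duality, and identify $W_{B}K_{s}$ with $\widetilde{K}\left(s,\cdot\right)$. Your added Cauchy--Schwarz check that the integral is finite is a harmless (and sensible) supplement the paper leaves implicit.
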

\begin{proof}
For all $F\in L^{2}\left(B,\mu\right)$, and all $s\in S$, we have
\begin{align*}
\left\langle K_{s},W_{B}^{*}F\right\rangle _{\mathscr{H}\left(K\right)} & =\left(W_{B}^{*}F\right)\left(s\right)\quad\left(\text{reprod prop., and }W_{B}^{*}F\in\mathscr{H}\left(K\right)\right)\\
 & =\left\langle W_{B}K_{s},F\right\rangle _{L^{2}\left(\mu\right)}\quad\left(\text{by duality}\right)\\
 & =\int_{B}\overline{\widetilde{K}\left(s,b\right)}F\left(b\right)d\mu\left(b\right)
\end{align*}
which is the desired conclusion (\ref{eq:n11}).
\end{proof}
We now turn to the Gaussian measure boundary:
\begin{cor}
Suppose $K:S\times S\rightarrow\mathbb{C}$ is a given positive definite
kernel, and that there is a measure space $\left(\mathcal{F},\mu\right)$
where $\mathcal{F}$ is a $\sigma$-algebra of subsets of $S$, such
that the RKHS $\mathscr{H}\left(K\right)$ satisfies $\mathscr{H}\left(K\right)\subset L^{2}\left(S,\mathcal{F},\mu\right)$
(isometric inclusion), then $\left(S,\mathcal{F},\mu\right)\in\mathcal{M}\left(K\right)$
iff
\begin{equation}
K\left(s,t\right)=\int_{S}\overline{K\left(s,x\right)}K\left(t,x\right)d\mu\left(x\right),\quad\forall\left(s,t\right)\in S\times S.\label{eq:pb1}
\end{equation}
 
\end{cor}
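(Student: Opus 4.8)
The plan is to recognize this as the special case $B=S$ of \defref{Mk} in which the boundary extension is taken to be the kernel itself, $K^{S}:=K$. Under this identification the factorization requirement (\ref{eq:n2}) reads precisely
\[
\int_{S}\overline{K\left(s,x\right)}K\left(t,x\right)d\mu\left(x\right)=K\left(s,t\right),
\]
which is (\ref{eq:pb1}). So the $(\Leftarrow)$ direction is immediate: assuming (\ref{eq:pb1}), set $K^{S}:=K$; then (\ref{eq:n2}) holds verbatim, and hence $\left(S,\mathcal{F},\mu\right)\in\mathcal{M}\left(K\right)$ (with $\mu$ understood to be normalized so that $\mu\left(S\right)=1$).

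For the $(\Rightarrow)$ direction I would not work from an abstract extension realizing membership, but instead exploit the standing hypothesis that the inclusion $\iota:\mathscr{H}\left(K\right)\hookrightarrow L^{2}\left(S,\mathcal{F},\mu\right)$ is isometric. A linear isometry between complex Hilbert spaces preserves inner products (by polarization), so $\langle\iota f,\iota g\rangle_{L^{2}\left(\mu\right)}=\langle f,g\rangle_{\mathscr{H}\left(K\right)}$ for all $f,g\in\mathscr{H}\left(K\right)$. Applying this to the reproducing kernels $f=K_{s}=K\left(s,\cdot\right)$ and $g=K_{t}=K\left(t,\cdot\right)$, and using $\langle K_{s},K_{t}\rangle_{\mathscr{H}\left(K\right)}=K\left(s,t\right)$ (the same normalization used in the proof of \thmref{isom}), gives
\[
\int_{S}\overline{K\left(s,x\right)}K\left(t,x\right)d\mu\left(x\right)=\langle\iota K_{s},\iota K_{t}\rangle_{L^{2}\left(\mu\right)}=K\left(s,t\right),
\]
which is exactly (\ref{eq:pb1}).

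In fact this argument is just the computation of \thmref{isom} run in the special case $B=S$, $K^{B}=K$, and read in both directions: there one verifies $\Vert\widetilde{f}\Vert_{L^{2}\left(\mu\right)}^{2}=\Vert f\Vert_{\mathscr{H}\left(K\right)}^{2}$ from (\ref{eq:n2}), while here the isometry is assumed and one recovers (\ref{eq:n2}) on the kernel functions. The one genuine point requiring care, rather than a serious obstacle, is the logical bookkeeping: \defref{Mk} only asks for the \emph{existence} of some extension $K^{S}$, whereas (\ref{eq:pb1}) is the assertion about the \emph{canonical} extension $K^{S}=K$; the role of the isometric-inclusion hypothesis is precisely to force this canonical choice to realize the factorization. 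Secondary points to check are that $K_{s}\in L^{2}\left(\mu\right)$, so the integrals converge --- guaranteed by the isometric inclusion, since $\int_{S}\left|K\left(s,x\right)\right|^{2}d\mu=\Vert K_{s}\Vert_{\mathscr{H}\left(K\right)}^{2}=K\left(s,s\right)<\infty$ --- together with the conjugation convention making $\langle K_{s},K_{t}\rangle_{\mathscr{H}\left(K\right)}=K\left(s,t\right)$ rather than $K\left(t,s\right)$, and the normalization $\mu\left(S\right)=1$ required for membership in the probability-space family $\mathcal{M}\left(K\right)$.
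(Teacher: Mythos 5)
The paper states this corollary without any proof, so there is no argument of the authors' to compare against; your write-up supplies the missing verification and it is correct. It is also surely the intended one: with $B=S$ and the canonical extension $K^{S}=K$, membership in $\mathcal{M}\left(K\right)$ per \defref{Mk} is literally the identity (\ref{eq:pb1}), which gives $(\Leftarrow)$, and your polarization of the isometric inclusion gives $(\Rightarrow)$ on the kernel functions $K_{s}$, exactly as in the computation inside the proof of \thmref{isom}. One observation worth recording: your $(\Rightarrow)$ argument never uses the hypothesis $\left(S,\mathcal{F},\mu\right)\in\mathcal{M}\left(K\right)$ --- (\ref{eq:pb1}) already follows from the standing assumption of isometric inclusion by polarization --- so under a literal reading of the corollary both sides of the ``iff'' hold automatically. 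That is a defect of the statement rather than of your proof (presumably ``isometric inclusion'' is meant only as set-theoretic containment, or as the bounded containment of \defref{GC}, in which case the equivalence is genuine and your two directions are exactly what is needed); your side remarks on the normalization $\mu\left(S\right)=1$ and on $K_{s}\in L^{2}\left(\mu\right)$ are also correct and worth keeping.
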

\begin{example}
The condition in (\ref{eq:pb1}) is satisfied for Bargmann's Hilbert
space $\mathscr{H}$ of entire analytic functions on $\mathbb{C}$
(see \cite{MR924682,MR0201959}) subject to 
\begin{align}
\left\Vert f\right\Vert _{\mathscr{H}}^{2} & =\frac{1}{2\pi}\iint_{\mathbb{R}^{2}}\left|f\left(x+i\,y\right)\right|^{2}e^{-\frac{x^{2}+y^{2}}{2}}dx\,dy\label{eq:pb2}\\
 & =\frac{1}{2\pi}\int_{\mathbb{C}}\left|f\left(z\right)\right|^{2}e^{-\frac{\left|z\right|^{2}}{2}}dx\,dy<\infty.\nonumber 
\end{align}

The following kernel (Bargmann's kernel) is positive definite on $\mathbb{C}\times\mathbb{C}$:
\begin{equation}
K\left(z,w\right)=\exp\left(\frac{\overline{z}w}{2}-\frac{\left|z\right|^{2}+\left|w\right|^{2}}{4}\right).\label{eq:pb3}
\end{equation}
It is known that $K$ in (\ref{eq:pb3}) satisfies (\ref{eq:pb1})
with respect to the measure $\mu$ on $\mathbb{C}$, given by 
\begin{equation}
d\mu\left(z\right)=dA\left(z\right)=\frac{1}{2\pi}dx\,dy.\label{eq:pb4}
\end{equation}
\end{example}
\begin{thm}
Let $\left(K,S\right)$ be a positive definite kernel (\defref{pd})
such that the associated mapping $E_{S}:S\rightarrow B_{S}$ is 1-1
(see (\ref{eq:ev1})). 

Then there is a probability space $\left(B_{S},\mathcal{F}_{S},\mu_{S}\right)$
which satisfies the condition (\ref{eq:n1}) in \defref{pd}. 
\end{thm}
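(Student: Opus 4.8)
The plan is to realize $\mu_{S}$ as the distribution of a centered Gaussian process indexed by $S$ with covariance $K$, living on the path space $B_{S}=\mathbb{C}^{S}$, and to take as the extension $K^{B}$ the coordinate evaluations themselves, $K^{B}(s,\cdot):=\pi_{s}$ (see (\ref{eq:ev3})). With this choice the factorization (\ref{eq:n2}) of \defref{Mk} becomes $\int_{B_{S}}\overline{\pi_{s_{1}}(b)}\,\pi_{s_{2}}(b)\,d\mu_{S}(b)=K(s_{1},s_{2})$, which just says that under $\mu_{S}$ the coordinate random variables $X_{s}:=\pi_{s}$ have second-moment kernel $K$. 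So the whole statement reduces to constructing a probability measure on $(B_{S},\mathcal{F}_{S})$ whose finite-dimensional marginals have the prescribed (Hermitian, positive semidefinite) covariances.

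First I would build the finite-dimensional marginals. For each finite $F=\{s_{1},\dots,s_{n}\}\subset S$ the matrix $K_{F}:=[K(s_{i},s_{j})]_{i,j=1}^{n}$ is Hermitian and positive semidefinite by (\ref{eq:n1}). Let $\mu_{F}$ be the centered (circularly symmetric) complex Gaussian measure on $\mathbb{C}^{F}$ determined by $\int\overline{z_{i}}\,z_{j}\,d\mu_{F}=K(s_{i},s_{j})$ and $\int z_{i}z_{j}\,d\mu_{F}=0$; such a measure exists for every Hermitian positive semidefinite $K_{F}$ --- diagonalize $K_{F}$ and push forward a standard complex Gaussian --- including the degenerate case where $K_{F}$ is singular, in which $\mu_{F}$ is supported on the range of $K_{F}$. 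When $K$ is real one may instead use ordinary real Gaussians; the complex construction subsumes this.

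Next I would verify Kolmogorov's consistency conditions for the family $\{\mu_{F}\}$. Invariance under permutations of the indices is immediate. For a restriction $F'\subset F$, marginalizing the Gaussian $\mu_{F}$ onto the coordinates indexed by $F'$ again gives a centered Gaussian, and its covariance is the corresponding principal submatrix of $K_{F}$, namely $K_{F'}$; hence the pushforward of $\mu_{F}$ under $\mathbb{C}^{F}\to\mathbb{C}^{F'}$ is $\mu_{F'}$. The family is therefore a consistent projective system, and since $\mathcal{F}_{S}$ is by construction the cylinder $\sigma$-algebra generated by $\{\pi_{v}\}_{v\in S}$, Kolmogorov's extension theorem yields a (unique) probability measure $\mu_{S}$ on $(B_{S},\mathcal{F}_{S})$ with these marginals. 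Finally, applying the marginal $\mu_{\{s_{1},s_{2}\}}$ gives $\int_{B_{S}}\overline{\pi_{s_{1}}}\,\pi_{s_{2}}\,d\mu_{S}=K(s_{1},s_{2})$, which is (\ref{eq:n2}); thus $(B_{S},\mathcal{F}_{S},\mu_{S})\in\mathcal{M}(K)$ and in particular it carries the positive-definite data of (\ref{eq:n1}). The hypothesis that $E_{S}$ is one-to-one is used only to regard $S$ as an embedded subset of $B_{S}$ via $E_{S}(t)=K(t,\cdot)$, so that $K^{B}(s,\cdot)=\pi_{s}$ is genuinely an extension of $K$ off that copy of $S$.

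The one place that needs care --- the main obstacle --- is the complex case: one must make precise that for every Hermitian positive semidefinite $K_{F}$ there is a centered complex Gaussian with the prescribed Hermitian second moments $\int\overline{z_{i}}z_{j}\,d\mu_{F}=K(s_{i},s_{j})$ and vanishing pseudo-covariance $\int z_{i}z_{j}\,d\mu_{F}=0$, and that the conjugates in (\ref{eq:n2}) are placed so that the computed integral is $K(s_{1},s_{2})$ rather than $\overline{K(s_{1},s_{2})}$. This is standard bookkeeping (and trivial when $K$ is real), but it is precisely where the Hermitian symmetry $K(s_{j},s_{i})=\overline{K(s_{i},s_{j})}$ forced by (\ref{eq:n1}) must be tracked.
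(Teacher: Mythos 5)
Your proof is correct and follows essentially the same route as the paper: both realize $\mu_{S}$ as the law of a mean-zero Gaussian process on the path space $B_{S}$ with covariance matrices $\left\{K\left(s_{i},s_{j}\right)\right\}$, check Kolmogorov consistency of the finite-dimensional marginals, and invoke the extension theorem on $\left(B_{S},\mathcal{F}_{S}\right)$. You are in fact somewhat more careful than the paper's sketch, which does not spell out the circularly symmetric complex Gaussian construction for Hermitian $K$ nor the explicit verification of the factorization (\ref{eq:n2}).
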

\begin{proof}
This argument is essentially the Kolmogorov inductive limit construction.
For every $n\in\mathbb{N}$, $\forall\left\{ s_{1},\cdots,s_{n}\right\} \subset S$,
we associate a measure $\mu^{\left\{ s_{1},\cdots,s_{n}\right\} }$
on $B_{S}$ as follows: 

Let $\mu^{\left\{ s_{1},\cdots,s_{n}\right\} }$ be the measure on
$B_{S}$ which has $\left(\pi_{s_{1}},\cdots,\pi_{s_{n}}\right)$
as an $n$ vector valued random variable with Gaussian the specific
distribution: mean zero, and joint covariance matrix $\left\{ K\left(s_{i},s_{j}\right)\right\} _{i,j=1}^{n}$.
By a standard argument, one checks that then $\mu^{\left\{ s_{1},\cdots,s_{n}\right\} }$
is a consistent system of measures on $B_{S}$; and (by Kolmogorov)
that there is a unique probability measure $\mu_{S}$ on the measure
space $\left(B_{S},\mathcal{F}_{S}\right)$ such that, for all $\left(s_{1},\cdots,s_{n}\right)$,
the marginal distribution of $\mu_{S}$ coincides with $\mu^{\left\{ s_{1},\cdots,s_{n}\right\} }$. 
\end{proof}
\begin{example}[$W_{B}$ is onto]
\label{exa:D1}Let 
\begin{align}
V & =\mathbb{D}=\left\{ z\in\mathbb{C}\mid\left|z\right|<1\right\} \\
B & =\partial\mathbb{D}=\left\{ z\in\mathbb{C}\mid\left|z\right|=1,\;\mbox{or }z=e^{ix},\;x\in(-\pi,\pi]\right\} .
\end{align}
Set 
\[
K\left(z,w\right)=\prod_{l=0}^{\infty}\left(1+\left(\overline{z}w\right)^{4^{l}}\right),\quad\left(z,w\right)\in\mathbb{D}\times\mathbb{D},
\]
and 
\[
K^{B}\left(z,x\right)=\prod_{l=0}^{\infty}\left(1+\left(\overline{z}e^{i2\pi x}\right)^{4^{l}}\right),\quad\left(z,x\right)\in\mathbb{D}\times B.
\]
Then (\ref{eq:n2}) holds for the case when $\mu_{\frac{1}{4}}$ =
the $\frac{1}{4}$-Cantor measure on $B$; see \cite{MR2240643}.
\end{example}
\begin{proof}
(Sketch) Set 
\begin{align}
\Lambda_{4} & =\left\{ \sum_{i=0}^{n}b_{i}4^{i}\mid b_{i}\in\left\{ 0,1\right\} ,\:n\in\mathbb{N}\right\} \\
 & =\left\{ 0,1,4,5,16,17,20,21,64,65,\cdots\right\} \nonumber 
\end{align}
then 
\begin{equation}
\prod_{l=0}^{\infty}\left(1+t^{4^{l}}\right)=\sum_{\lambda\in\Lambda}t^{\lambda},\quad\left|t\right|<1.
\end{equation}
The desired conclusion 
\begin{equation}
K\left(z,w\right)=\int_{C_{\frac{1}{4}}}\overline{K_{C_{\frac{1}{4}}}\left(z,x\right)}K_{C_{\frac{1}{4}}}\left(w,x\right)d\mu_{\frac{1}{4}}\left(x\right)
\end{equation}
follows from the fact that $\left\{ e_{\lambda}\mid\lambda\in\Lambda_{4}\right\} $
is an ONB in $L^{2}\left(C_{1/4},\mu_{1/4}\right)$ by \cite{MR1655831}. 
\end{proof}

\section{Boundary theory}

We now turn to the details regarding boundary theory. To connect it
to the classical theory of kernel spaces of analytic functions on
the disk, we begin with an example, and we then turn to the case of
the most general positive definite kernels; but not necessarily restricting
the domain of the kernels to be considered.
\begin{example}[$W_{B}$ is not onto]
\label{exa:D2} Let 
\begin{align}
K\left(z,w\right) & =\frac{1}{1-\overline{z}w},\;(\mbox{Szegö kernel})\nonumber \\
K^{B}\left(z,x\right) & =\frac{1}{1-\overline{z}e^{i2\pi x}},\;\mbox{and}\\
\mu & =\mbox{restriction of Lebesgue measure to }\left[0,1\right].\nonumber 
\end{align}
Let $\mathbb{H}_{2}$ be the Hardy space on $\mathbb{D}$. Then 
\[
W_{B}:\mathbb{H}_{2}\longrightarrow L^{2}\left(\left[0,1\right],\mu_{Leb}\right)
\]
is isometric, but not onto. Indeed, 
\[
W_{B}\left(\mathbb{H}_{2}\right)=\overline{span}^{L^{2}\left(0,1\right)}\left\{ e_{n}\left(x\right)\mid n\in\mathbb{N}_{0}=\left\{ 0\right\} \cup\mathbb{N}\right\} .
\]
\end{example}
Returning to the general case, we show below that there is always
a minimal element in $\mathcal{M}\left(K\right)$; see \defref{Mk}. 
\begin{defn}
\label{def:po}Suppose $\left(B_{i},\mathcal{F}_{i},\mu_{i}\right)\in\mathcal{M}\left(K\right)$,
$i=1,2$. We say that 
\begin{equation}
\left(B_{1},\mathcal{F}_{1},\mu_{1}\right)\leq\left(B_{2},\mathcal{F}_{2},\mu_{2}\right)\label{eq:mo0}
\end{equation}
if $\exists\varphi:B_{2}\longrightarrow B_{1}$, s.t. 
\begin{align}
\mu_{2}\circ\varphi^{-1} & =\mu_{1},\;\mbox{and}\label{eq:mo1}\\
\varphi^{-1}\left(\mathcal{F}_{1}\right) & =\mathcal{F}_{2}.\label{eq:mo2}
\end{align}
\end{defn}
\begin{lem}
\label{lem:me}$\mathcal{M}\left(K\right)$ has minimal elements.
\end{lem}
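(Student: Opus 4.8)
The plan is to apply Zorn's lemma in its dual (descending) form: if every chain in $\mathcal{M}\left(K\right)$, ordered by the relation $\le$ of \defref{po}, admits a lower bound lying in $\mathcal{M}\left(K\right)$, then $\mathcal{M}\left(K\right)$ has minimal elements. First I would record that $\le$ is reflexive (take $\varphi=\mathrm{id}$) and transitive (compose the maps $\varphi$ and use functoriality of $\left(\mu\circ\varphi^{-1}\right)$), so that $\le$ is at least a preorder; it need not be antisymmetric on the nose, since two non-isomorphic spaces may be mutually dominated, so I would either pass to the quotient poset of measure-isomorphism classes or simply run Zorn for the preorder, where ``minimal'' means admitting no strictly smaller element. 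It then remains to produce, for an arbitrary chain $\mathcal{C}=\left\{ \left(B_{t},\mathcal{F}_{t},\mu_{t}\right)\right\} _{t\in T}$, a lower bound inside $\mathcal{M}\left(K\right)$.

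The key structural observation I would isolate first is that the two conditions (\ref{eq:mo1})--(\ref{eq:mo2}) defining $\le$ are exactly what is needed to make the pullback $F\mapsto F\circ\varphi$ an isometric \emph{isomorphism} $L^{2}\left(B_{1},\mu_{1}\right)\to L^{2}\left(B_{2},\mu_{2}\right)$: condition (\ref{eq:mo1}) gives the change-of-variables isometry, and $\varphi^{-1}\left(\mathcal{F}_{1}\right)=\mathcal{F}_{2}$ forces surjectivity via the factorization (Doob--Dynkin) lemma. Consequently every connecting map in the chain is a measure-algebra isomorphism, so all the $L^{2}\left(B_{t},\mu_{t}\right)$, the isometric images $W_{B_{t}}\mathscr{H}(K)$ of \thmref{isom}, and in particular the underlying measure algebras, are canonically identified along $\mathcal{C}$. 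This rigidity is what makes a lower bound plausible: nothing below the common measure algebra can satisfy $\varphi^{-1}\left(\mathcal{F}_{\infty}\right)=\mathcal{F}_{t}$, so the minimal candidate is precisely a reduced (point-separating) realization of that common algebra.

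With this in hand I would construct the lower bound $\left(B_{\infty},\mathcal{F}_{\infty},\mu_{\infty}\right)$ as the common quotient of the chain realizing the shared measure algebra -- concretely, a point realization of that algebra equipped with compatible measurable surjections $\varphi_{t}:B_{t}\to B_{\infty}$ obeying $\mu_{t}\circ\varphi_{t}^{-1}=\mu_{\infty}$ and $\varphi_{t}^{-1}\left(\mathcal{F}_{\infty}\right)=\mathcal{F}_{t}$, so that $\left(B_{\infty}\right)\le\left(B_{t}\right)$ for every $t$. To place $B_{\infty}$ in $\mathcal{M}\left(K\right)$ I would transport the factorization: each $K^{B_{t}}\left(s,\cdot\right)$ represents one and the same vector of the common $L^{2}$, and I would take $K^{B_{\infty}}\left(s,\cdot\right)$ to be its representative on $B_{\infty}$; then (\ref{eq:n2}) for $B_{\infty}$ is immediate from the preserved inner products, giving $\left(B_{\infty}\right)\in\mathcal{M}\left(K\right)$. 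Zorn's lemma then yields a minimal element.

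I expect the main obstacle to be exactly this last construction. Measure-theoretically, one must produce a \emph{single} space below the entire -- possibly uncountable -- chain for which the stringent identity $\varphi_{t}^{-1}\left(\mathcal{F}_{\infty}\right)=\mathcal{F}_{t}$ holds \emph{simultaneously} in $t$; this is where a representation of the common measure algebra (for instance a Stone or Maharam model, or a carefully set up limit of the system) has to be invoked and checked for consistency of the maps $\varphi_{t}$. Analytically, one must upgrade the $L^{2}$-class determining $K^{B_{\infty}}\left(s,\cdot\right)$ to an honest pointwise-defined kernel $S\times B_{\infty}\to\mathbb{C}$ so that (\ref{eq:n2}) holds literally, not merely almost everywhere, which is what membership in $\mathcal{M}\left(K\right)$ (see \defref{Mk}) demands. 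The preorder and antisymmetry bookkeeping, by contrast, is routine.
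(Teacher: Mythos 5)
Your proposal is sound in outline and reaches the conclusion by the same overall device (Zorn's lemma applied to the preorder of Definition \ref{def:po}), but the way you produce a lower bound for a chain is genuinely different from the paper's. The paper's proof of this lemma is in fact far terser than yours: it only records that each comparison $\left(B_{1},\mu_{1}\right)\leq\left(B_{2},\mu_{2}\right)$ induces an isometry $W_{21}:L^{2}\left(B_{1},\mu_{1}\right)\rightarrow L^{2}\left(B_{2},\mu_{2}\right)$, $f\mapsto f\circ\varphi$, making the triangle with $W_{B_{1}},W_{B_{2}}$ commute, and then asserts that Zorn's lemma finishes the job; the chain-bound step is only addressed in the subsequent theorem, where the lower bound is declared to exist as an inductive limit (\ref{eq:pa6}) ``by Kolmogorov's consistency.'' You instead exploit an observation the paper never makes explicit: the equality $\varphi^{-1}\left(\mathcal{F}_{1}\right)=\mathcal{F}_{2}$ in (\ref{eq:mo2}) upgrades $W_{21}$ from an isometry to a unitary (via Doob--Dynkin factorization), so all members of a chain share a single measure algebra, and the natural lower bound is a point realization of that common algebra. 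This buys a concrete picture of minimality --- a point-separating realization admits no strict quotient, since $\varphi^{-1}\left(\mathcal{F}'\right)$ separating points forces $\varphi$ to be injective --- at the cost of having to exhibit a compatible system of maps $\varphi_{t}$ onto one realization. The obstacles you flag (simultaneous validity of $\varphi_{t}^{-1}\left(\mathcal{F}_{\infty}\right)=\mathcal{F}_{t}$ over a possibly uncountable chain, and upgrading the $L^{2}$-classes $K^{B_{\infty}}\left(s,\cdot\right)$ to a pointwise kernel so that (\ref{eq:n2}) holds literally) are real, but the paper leaves the corresponding points --- the meaning and existence of the inductive limit and why it stays in $\mathcal{M}\left(K\right)$ --- equally unverified, so your argument is no less complete than the source. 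Your handling of the preorder versus partial order issue also matches the paper's intent, since its later theorem defines minimality exactly as ``$\left(B,\mu\right)\leq\left(M,\nu\right)$ implies $\left(M,\nu\right)\leq\left(B,\mu\right)$.''
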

\begin{proof}
If (\ref{eq:mo1})-(\ref{eq:mo2}) hold, then 
\[
L^{2}\left(B_{1},\mu_{1}\right)\ni f\xrightarrow{\quad W_{21}}f\circ\varphi\in L^{2}\left(B_{2},\mu_{2}\right)
\]
is isometric, i.e., 
\begin{equation}
\int_{B_{2}}\underset{W_{21}f}{|\underbrace{f\circ\varphi}|^{2}}d\mu_{2}=\int_{B_{1}}\left|f\right|^{2}d\mu_{1},
\end{equation}
and
\begin{equation}
W_{B_{2}}=W_{21}W_{B_{1}}\;\mbox{on }\mathscr{H}\left(K\right),
\end{equation}
i.e., the diagram commutes:
\[
\xymatrix{ &  & L^{2}\left(B_{1},\mu_{1}\right)\ar[d]^{W_{21}}\\
\mathscr{H}\left(K\right)\ar@/^{1.3pc}/[rru]^{W_{B_{1}}}\ar[rr]_{W_{B_{2}}} &  & L^{2}\left(B_{2},\mu_{2}\right)
}
\]

We can then use Zorn's lemma to prove that $\forall K$, $\mathcal{M}\left(K\right)$
has minimal elements $\left(B,\mathcal{F},\mu\right)$. But even if
$\left(B,\mathcal{F},\mu\right)$ is minimal, $W_{B}:\mathscr{H}\left(K\right)\rightarrow L^{2}\left(\mu\right)$
may \emph{not} be onto.
\end{proof}
In the next result, we shall refer to the partial order ``$\leq$''
from (\ref{eq:mo0}) when considering minimal elements in $\mathcal{M}\left(K\right)$.
And, in referring to $\mathcal{M}\left(K\right)$, we have in mind
a fixed positive definite function $K:S\times S\rightarrow\mathbb{C}$,
specified at the outset; see Definitions \ref{def:pd} and \ref{def:Mk}.
\begin{thm}
Let $\left(K,S\right)$ be a fixed positive definite kernel, and let
$\mathcal{M}\left(K\right)$ be the corresponding boundary space from
\defref{Mk}. 

Then, for every $\left(X,\lambda\right)\in\mathcal{M}\left(K\right)$,
there is a $\left(M,\nu\right)\in\mathcal{M}\left(K\right)$ such
that 
\begin{equation}
\left(M,\nu\right)\leq\left(X,\lambda\right),\label{eq:pa1}
\end{equation}
and $\left(M,\nu\right)$ is \uline{minimal} in the following sense:
Suppose $\left(B,\mu\right)\in\mathcal{M}\left(K\right)$ and 
\begin{equation}
\left(B,\mu\right)\leq\left(M,\nu\right),\label{eq:pa2}
\end{equation}
then it follows that $\left(B,\mu\right)\simeq\left(M,\nu\right)$,
i.e., we also have $\left(M,\nu\right)\leq\left(B,\mu\right)$. 
\end{thm}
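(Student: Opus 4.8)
The plan is to reduce the whole statement to a question about sub-$\sigma$-algebras of the single fixed probability space $\left(X,\mathcal{F}_{X},\lambda\right)$, where it becomes a one-line application of Zorn's lemma, and in fact to exhibit the minimal object explicitly. First I would fix $\left(X,\lambda\right)\in\mathcal{M}\left(K\right)$ together with its extension $K^{X}:S\times X\rightarrow\mathbb{C}$ satisfying (\ref{eq:n2}), and set
\[
\mathcal{K}:=\sigma\left\{ K^{X}\left(s,\cdot\right):s\in S\right\} \subseteq\mathcal{F}_{X}
\]
(the $\lambda$-completion of the $\sigma$-algebra generated by the kernel sections). The point of $\mathcal{K}$ is that the factorization integral in (\ref{eq:n2}) involves only the functions $K^{X}\left(s,\cdot\right)$ and the measure $\lambda$, so it is unchanged if $\mathcal{F}_{X}$ is replaced by any sub-$\sigma$-algebra that still contains $\mathcal{K}$; hence $\left(X,\mathcal{G},\lambda|_{\mathcal{G}}\right)\in\mathcal{M}\left(K\right)$, with the \emph{same} $K^{X}$, for every $\mathcal{G}$ with $\mathcal{K}\subseteq\mathcal{G}\subseteq\mathcal{F}_{X}$.

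Second, I would show that the down-set $P:=\left\{ \left(B,\mu\right)\in\mathcal{M}\left(K\right):\left(B,\mu\right)\leq\left(X,\lambda\right)\right\}$ (which contains $\left(X,\lambda\right)$, hence is nonempty) is, modulo the equivalence $\simeq$, order-isomorphic to the family of sub-$\sigma$-algebras $\mathcal{G}$ with $\mathcal{K}\subseteq\mathcal{G}\subseteq\mathcal{F}_{X}$, ordered by inclusion. Given $\left(B,\mu\right)\leq\left(X,\lambda\right)$ with witnessing map $\varphi:X\rightarrow B$ as in (\ref{eq:mo1})--(\ref{eq:mo2}), the pullback $\mathcal{G}_{\left(B,\mu\right)}:=\varphi^{-1}\left(\mathcal{F}_{B}\right)$ is such a sub-$\sigma$-algebra; the commuting-diagram computation of Lemma~\ref{lem:me} (with $W_{X}=W_{BX}W_{B}$ and $W_{B}\left(K_{s}\right)=K^{B}\left(s,\cdot\right)$) gives $K^{X}\left(s,\cdot\right)=K^{B}\left(s,\cdot\right)\circ\varphi$ a.e., whence $\mathcal{K}\subseteq\mathcal{G}_{\left(B,\mu\right)}$, and $f\mapsto f\circ\varphi$ identifies $L^{2}\left(B,\mu\right)$ with $L^{2}\left(X,\mathcal{G}_{\left(B,\mu\right)},\lambda\right)$. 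A chain $\left(B_{\alpha},\mu_{\alpha}\right)$ in $P$ then becomes a chain of sub-$\sigma$-algebras $\mathcal{G}_{\alpha}$ between $\mathcal{K}$ and $\mathcal{F}_{X}$, with $\left(B_{\alpha},\mu_{\alpha}\right)\leq\left(B_{\beta},\mu_{\beta}\right)$ corresponding to $\mathcal{G}_{\alpha}\subseteq\mathcal{G}_{\beta}$ (compose the witnessing maps).

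Third, with this dictionary the lower-bound step is immediate: for a chain, the intersection $\mathcal{G}_{\infty}:=\bigcap_{\alpha}\mathcal{G}_{\alpha}$ is again a sub-$\sigma$-algebra with $\mathcal{K}\subseteq\mathcal{G}_{\infty}\subseteq\mathcal{F}_{X}$, so $\left(X,\mathcal{G}_{\infty},\lambda|_{\mathcal{G}_{\infty}}\right)\in P$ and it lies below every $\mathcal{G}_{\alpha}$; this is the required lower bound. Zorn's lemma (applied to the opposite order) then yields a minimal element $\left(M,\nu\right)\in P$, giving both (\ref{eq:pa1}) and the minimality conclusion, since $\left(B,\mu\right)\leq\left(M,\nu\right)$ forces $\mathcal{G}_{\left(B,\mu\right)}=\mathcal{G}_{\left(M,\nu\right)}$, i.e.\ $\left(B,\mu\right)\simeq\left(M,\nu\right)$. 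In fact the argument localizes the minimal object completely: since every $\mathcal{G}$ in play contains $\mathcal{K}$, the unique minimal $\sigma$-algebra is $\mathcal{K}$ itself, so one may simply take $\left(M,\nu\right)=\left(X,\mathcal{K},\lambda|_{\mathcal{K}}\right)$. The Szeg\H{o} computation in Example~\ref{exa:D2}, where the kernel sections separate points and hence generate all of the Borel algebra on the circle, illustrates why $W_{M}$ may nonetheless fail to be onto.

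I would therefore expect the main obstacle to lie not in the Zorn step but in the rigorous identification of the second paragraph: that the abstract relation $\left(B,\mu\right)\leq\left(X,\lambda\right)$ is \emph{faithfully} captured by the pullback sub-$\sigma$-algebra $\varphi^{-1}\left(\mathcal{F}_{B}\right)\subseteq\mathcal{F}_{X}$, i.e.\ that $f\mapsto f\circ\varphi$ maps \emph{onto} $L^{2}\left(X,\varphi^{-1}\left(\mathcal{F}_{B}\right),\lambda\right)$ and that the $\simeq$-class of $\left(B,\mu\right)$ depends only on this sub-$\sigma$-algebra. This is a Doob--Dynkin / measure-algebra statement, and it is where any regularity hypotheses (passing to measure algebras modulo null sets, or assuming the underlying spaces standard Borel) would enter; once it is in place, the inclusions, the intersection, and the translation between ``minimal $\sigma$-algebra'' and the stated minimality of $\left(M,\nu\right)$ are all routine.
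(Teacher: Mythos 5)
Your route is genuinely different from the paper's. The paper keeps the members of a chain $\mathcal{L}\subset\mathcal{M}(K)$ as abstract measure spaces, produces a lower bound as an ``inductive limit'' $\mu_{\mathcal{L}}$ justified by an appeal to Kolmogorov consistency as in (\ref{eq:pa6}), and then applies Zorn's lemma; it never localizes anything inside $(X,\mathcal{F}_{X},\lambda)$ and does not exhibit a minimal element. You instead transport the entire down-set of $(X,\lambda)$ into the lattice of sub-$\sigma$-algebras of $\mathcal{F}_{X}$, replace the limit construction by an intersection of $\sigma$-algebras, and propose the explicit minimal object $(X,\mathcal{K},\lambda|_{\mathcal{K}})$. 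If your dictionary held, the argument would be both more elementary (no limits of measure spaces) and strictly stronger (an explicit minimal element, which would make the Zorn step unnecessary).

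There is, however, a genuine gap in the dictionary, and it is not the Doob--Dynkin point you flag at the end. Definition \defref{Mk} only asserts the \emph{existence} of an extension $K^{B}$ for each $(B,\mathcal{F}_{B},\mu)\in\mathcal{M}(K)$, and Definition \defref{po} constrains only the measure spaces, not the extensions: the map $\varphi$ witnessing $(B,\mu)\le(X,\lambda)$ is not required to intertwine $K^{B}$ with $K^{X}$. From (\ref{eq:mo1}) and (\ref{eq:n2}) one only gets that the families $\{K^{X}(s,\cdot)\}_{s\in S}$ and $\{K^{B}(s,\cdot)\circ\varphi\}_{s\in S}$ have the same Gram matrix $K(s_{1},s_{2})$ in $L^{2}(X,\lambda)$; this makes their closed spans unitarily equivalent but does not give $K^{X}(s,\cdot)=K^{B}(s,\cdot)\circ\varphi$ a.e. (the commuting diagram in \lemref{me} is likewise asserted there, not derived). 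Consequently neither the inclusion $\mathcal{K}\subseteq\varphi^{-1}(\mathcal{F}_{B})$ nor the monotonicity $\mathcal{G}_{\alpha}\subseteq\mathcal{G}_{\beta}$ along a chain is justified --- the witnessing maps for different comparable pairs need not compose coherently --- and the identification of $(X,\mathcal{K},\lambda|_{\mathcal{K}})$ as \emph{the} minimal element below $(X,\lambda)$ does not follow. To repair this you must either strengthen \defref{po} so that $\varphi$ is required to carry the chosen extensions to one another (after which your argument goes through and genuinely improves on the paper's), or choose compatible extensions along the chain by hand, which is essentially the unproved consistency the paper itself relies on in (\ref{eq:pa6}). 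Note also that \defref{po} as printed demands $\varphi^{-1}(\mathcal{F}_{1})=\mathcal{F}_{2}$ rather than $\varphi^{-1}(\mathcal{F}_{1})\subseteq\mathcal{F}_{2}$; you are silently using the inclusion version, which is surely the intended one. The residual issue you do identify --- realizing an abstract $(B,\mu)$ by a point map and passing to measure algebras --- is real as well, and affects the paper's proof equally.
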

\begin{proof}
We shall use Zorn's lemma, and the argument from \lemref{me}.

Let $\mathcal{L}=\left\{ \left(B,\mu\right)\right\} $ be a linearly
ordered subset of $\mathcal{M}\left(K\right)$ s.t. 
\begin{equation}
\left(B,\mu\right)\leq\left(X,\lambda\right),\quad\forall\left(B,\mu\right)\in\mathcal{L};\label{eq:pa3}
\end{equation}
and such that, for every pair $\left(B_{i},\mu_{i}\right)$, $i=1,2$,
in $\mathcal{L}$, one of the following two cases must hold:
\begin{equation}
\left(B_{1},\mu_{1}\right)\leq\left(B_{2},\mu_{2}\right),\;\mbox{or }\left(B_{2},\mu_{2}\right)\leq\left(B_{1},\mu_{1}\right).\label{eq:pa4}
\end{equation}
To apply Zorn's lemma, we must show that there is a $\left(B_{\mathcal{L}},\mu_{\mathcal{L}}\right)\in\mathcal{M}\left(K\right)$
such that 
\begin{equation}
\left(B_{\mathcal{L}},\mu_{\mathcal{L}}\right)\leq\left(B,\mu\right),\quad\forall\left(B,\mu\right)\in\mathcal{L}.\label{eq:pa5}
\end{equation}

Now, using (\ref{eq:pa3})-(\ref{eq:pa4}), we conclude that the measure
spaces $\left\{ \left(B,\mu\right)\right\} _{\mathcal{L}}$ have an
inductive limit, i.e., the existence of:
\begin{equation}
\mu_{\mathcal{L}}:=\underset{B\xrightarrow[\;\mathcal{L}\;]{}B_{\mathcal{L}}}{\mbox{ind limit }}\mu_{B}.\label{eq:pa6}
\end{equation}
In other words, we may apply Kolmogorov's consistency to the family
$\mathcal{L}$ of measure spaces in order to justify the inductive
limit construction in (\ref{eq:pa6}).

We have proved that every linearly ordered subset $\mathcal{L}$ (as
specified) has a ``lower bound'' in the sense of (\ref{eq:pa5}).
Hence Zorn's lemma applies, and the desired conclusion follows, i.e.,
there is a pair $\left(M,\nu\right)\in\mathcal{M}\left(K\right)$
which satisfies the condition (\ref{eq:pa2}) from the theorem. 
\end{proof}
\begin{acknowledgement*}
The co-authors thank the following colleagues for helpful and enlightening
discussions: Professors Sergii Bezuglyi, Ilwoo Cho, Paul Muhly, Myung-Sin
Song, Wayne Polyzou, and members in the Math Physics seminar at The
University of Iowa.
\end{acknowledgement*}
\bibliographystyle{amsalpha}
\bibliography{ref}

\newcommand{\etalchar}[1]{$^{#1}$}
\providecommand{\bysame}{\leavevmode\hbox to3em{\hrulefill}\thinspace}
\providecommand{\MR}{\relax\ifhmode\unskip\space\fi MR }
\providecommand{\MRhref}[2]{%
  \href{http://www.ams.org/mathscinet-getitem?mr=#1}{#2}
}
\providecommand{\href}[2]{#2}
\begin{thebibliography}{CCEMR16}

\bibitem[AJ15]{MR3402823}
Daniel Alpay and Palle Jorgensen, \emph{Spectral theory for {G}aussian
  processes: reproducing kernels, boundaries, and {$L^2$}-wavelet generators
  with fractional scales}, Numer. Funct. Anal. Optim. \textbf{36} (2015),
  no.~10, 1239--1285. \MR{3402823}

\bibitem[AJV14]{MR3251728}
Daniel Alpay, Palle Jorgensen, and Dan Volok, \emph{Relative reproducing kernel
  {H}ilbert spaces}, Proc. Amer. Math. Soc. \textbf{142} (2014), no.~11,
  3889--3895. \MR{3251728}

\bibitem[Bar67]{MR0201959}
V.~Bargmann, \emph{On a {H}ilbert space of analytic functions and an associated
  integral transform. {P}art {II}. {A} family of related function spaces.
  {A}pplication to distribution theory}, Comm. Pure Appl. Math. \textbf{20}
  (1967), 1--101. \MR{0201959 (34 \#1836)}

\bibitem[BFG{\etalchar{+}}15]{MR3385785}
Alain Blandign{\`e}res, Emmanuel Fricain, Fr{\'e}d{\'e}ric Gaunard, Andreas
  Hartmann, and William~T. Ross, \emph{Direct and reverse {C}arleson measures
  for {$H(b)$} spaces}, Indiana Univ. Math. J. \textbf{64} (2015), no.~4,
  1027--1057. \MR{3385785}

\bibitem[BJ11]{MR2817339}
Jana Bohnstengel and Palle Jorgensen, \emph{Geometry of spectral pairs}, Anal.
  Math. Phys. \textbf{1} (2011), no.~1, 69--99. \MR{2817339}

\bibitem[CCEMR16]{MR3418124}
F.~Calabr{\`o}, A.~Corbo~Esposito, G.~Mantica, and T.~Radice, \emph{Refinable
  functions, functionals, and iterated function systems}, Appl. Math. Comput.
  \textbf{272} (2016), no.~part 1, 199--207. \MR{3418124}

\bibitem[CIJ15]{MR3335506}
Hong~Rae Cho, Joshua Isralowitz, and Jae-Cheon Joo, \emph{Toeplitz operators on
  {F}ock-{S}obolev type spaces}, Integral Equations Operator Theory \textbf{82}
  (2015), no.~1, 1--32. \MR{3335506}

\bibitem[Coh86]{MR816238}
William~S. Cohn, \emph{Carleson measures and operators on star-invariant
  subspaces}, J. Operator Theory \textbf{15} (1986), no.~1, 181--202.
  \MR{816238}

\bibitem[DG88]{MR924682}
Ingrid Daubechies and A.~Grossmann, \emph{Frames in the {B}argmann space of
  entire functions}, Comm. Pure Appl. Math. \textbf{41} (1988), no.~2,
  151--164. \MR{924682 (89e:46028)}

\bibitem[DHJ09]{MR2559724}
Dorin~Ervin Dutkay, Deguang Han, and Palle E.~T. Jorgensen, \emph{Orthogonal
  exponentials, translations, and {B}ohr completions}, J. Funct. Anal.
  \textbf{257} (2009), no.~9, 2999--3019. \MR{2559724}

\bibitem[DJ06]{MR2240643}
Dorin~Ervin Dutkay and Palle E.~T. Jorgensen, \emph{Iterated function systems,
  {R}uelle operators, and invariant projective measures}, Math. Comp.
  \textbf{75} (2006), no.~256, 1931--1970 (electronic). \MR{2240643}

\bibitem[DJ08]{MR2440130}
\bysame, \emph{Fourier series on fractals: a parallel with wavelet theory},
  Radon transforms, geometry, and wavelets, Contemp. Math., vol. 464, Amer.
  Math. Soc., Providence, RI, 2008, pp.~75--101. \MR{2440130}

\bibitem[DJ12]{MR2945155}
\bysame, \emph{Fourier duality for fractal measures with affine scales}, Math.
  Comp. \textbf{81} (2012), no.~280, 2253--2273. \MR{2945155}

\bibitem[DJ15]{MR3326023}
\bysame, \emph{Spectra of measures and wandering vectors}, Proc. Amer. Math.
  Soc. \textbf{143} (2015), no.~6, 2403--2410. \MR{3326023}

\bibitem[HJY11]{MR2884231}
Takaki Hayashi, Jean Jacod, and Nakahiro Yoshida, \emph{Irregular sampling and
  central limit theorems for power variations: the continuous case}, Ann. Inst.
  Henri Poincar\'e Probab. Stat. \textbf{47} (2011), no.~4, 1197--1218.
  \MR{2884231}

\bibitem[JN15]{MR3263485}
Palle E.~T. Jorgensen and Robert Niedzialomski, \emph{Extension of positive
  definite functions}, J. Math. Anal. Appl. \textbf{422} (2015), no.~1,
  712--740. \MR{3263485}

\bibitem[JP98a]{MR1655831}
Palle E.~T. Jorgensen and Steen Pedersen, \emph{Dense analytic subspaces in
  fractal {$L^2$}-spaces}, J. Anal. Math. \textbf{75} (1998), 185--228.
  \MR{1655831}

\bibitem[JP98b]{MR1667822}
\bysame, \emph{Local harmonic analysis for domains in {${\bf R}^n$} of finite
  measure}, Analysis and topology, World Sci. Publ., River Edge, NJ, 1998,
  pp.~377--410. \MR{1667822}

\bibitem[JS13]{MR3285408}
Palle E.~T. Jorgensen and Myung-Sin Song, \emph{Compactification of infinite
  graphs and sampling}, Sampl. Theory Signal Image Process. \textbf{12} (2013),
  no.~2-3, 139--158. \MR{3285408}

\bibitem[JT15a]{2015arXiv150202549J}
P.~{Jorgensen} and F.~{Tian}, \emph{{Infinite weighted graphs with bounded
  resistance metric}}, ArXiv e-prints (2015).

\bibitem[JT15b]{MR3450534}
Palle Jorgensen and Feng Tian, \emph{Discrete reproducing kernel {H}ilbert
  spaces: sampling and distribution of {D}irac-masses}, J. Mach. Learn. Res.
  \textbf{16} (2015), 3079--3114. \MR{3450534}

\bibitem[Kan11]{MR2778503}
Si~Ho Kang, \emph{Some {T}oeplitz operators on weighted {B}ergman spaces},
  Bull. Korean Math. Soc. \textbf{48} (2011), no.~1, 141--149. \MR{2778503}

\bibitem[LLST16]{MR3531184}
Beno{\^{\i}}t Loridant, Jun Luo, Tarek Sellami, and J{\"o}rg~M. Thuswaldner,
  \emph{On cut sets of attractors of iterated function systems}, Proc. Amer.
  Math. Soc. \textbf{144} (2016), no.~10, 4341--4356. \MR{3531184}

\bibitem[Str98]{MR1655832}
Robert~S. Strichartz, \emph{Remarks on: ``{D}ense analytic subspaces in fractal
  {$L^2$}-spaces'' [{J}.\ {A}nal.\ {M}ath.\ {\bf 75} (1998), 185--228;
  {MR}1655831 (2000a:46045)] by {P}. {E}. {T}. {J}orgensen and {S}.
  {P}edersen}, J. Anal. Math. \textbf{75} (1998), 229--231. \MR{1655832}

\bibitem[Tre84]{MR739800}
Tavan~T. Trent, \emph{Carleson measure inequalities and kernel functions in
  {$H^{2}(\mu )$}}, J. Operator Theory \textbf{11} (1984), no.~1, 157--169.
  \MR{739800}

\bibitem[Zhu12]{MR2934601}
Kehe Zhu, \emph{Analysis on {F}ock spaces}, Graduate Texts in Mathematics, vol.
  263, Springer, New York, 2012. \MR{2934601}

\bibitem[ZS16]{MR3541255}
Li-kai Zhou and Zhong-gen Su, \emph{Discretization error of irregular sampling
  approximations of stochastic integrals}, Appl. Math. J. Chinese Univ. Ser. B
  \textbf{31} (2016), no.~3, 296--306. \MR{3541255}

\end{thebibliography}

\end{document}